\newtheorem{example}{Example}
\newtheorem{theorem}{Theorem}
\newtheorem{proposition}{Proposition}
\newtheorem{lemma}{Lemma}
\newtheorem{remark}{Remark}
\newtheorem{proof}{Proof}
\newcommand{\blind}{0}
\begin{document}

\def\spacingset#1{\renewcommand{\baselinestretch}%
{#1}\small\normalsize} \spacingset{1}


\if0\blind
{
  \title{\bf Sliced Latin hypercube designs with arbitrary run sizes}
   \author{Jin Xu\\
     College of Liberal Arts and Sciences, National University of Defense Technology\\
 and \\
    Xu He\\
    Academy of Mathematics and System Sciences, Chinese Academy Sciences\\
        and \\
        Xiaojun Duan \\
       College of Liberal Arts and Sciences, National University of Defense Technology\\
       and \\
       Zhengming Wang\\
      College of Advanced Interdisciplinary Research, National University of \\Defense Technology
}
  \maketitle
} \fi

\if1\blind
{
  \bigskip
  \bigskip
  \bigskip
  \begin{center}
    {\LARGE\bf Title}
\end{center}
  \medskip
} \fi

\bigskip
\begin{abstract}
Latin hypercube designs achieve optimal univariate stratifications and are useful for computer experiments. 
Sliced Latin hypercube designs are Latin hypercube designs that can be partitioned into smaller Latin hypercube designs. 
In this work, we give, to the best of our knowledge, the first construction of sliced Latin hypercube designs that allow arbitrarily chosen run sizes for the slices. 
We also provide an algorithm to reduce correlations of our proposed designs. 
\end{abstract}

\noindent%
{\it Keywords:}  Computer experiments; Correlation; Emulation; Multi-fidelity computer modeling; Numerical integration; Variance reduction. 
\vfill

\newpage
\spacingset{2} 
\section{Introduction}
\label{sec:intro}
Latin hypercube designs are useful for numerical integration and emulation of computer experiments. 
An $n\times p$ matrix is called a Latin hypercube design if each of its columns contains exactly one point in each of the $n$ bins of $(0,1/n],(1/n,2/n],\cdots,((n-1)/n,1]$, which is called the design achieves univariate stratifications.
\cite{mcKay1979a_compar} proposed a method to generate Latin hypercube designs.
\cite{stein1987large} gave that the variance of the sample mean based on Latin hypercube designs can achieve more reduction than independent and identically distributed sampled.
\cite{owen1992a_central} extended Stein's work and proved a central limit theorem. 
\cite{loh1996on} provided some results about the multivariate central limit theorem and the convergence rate for the sample mean based on Latin hypercube designs.

Sliced Latin hypercube designs~\citep{qian2012sliced_Latin} are Latin hypercube designs that can be partitioned into several smaller Latin hypercube designs. 
Such designs are appealing when computer simulations are carried out in batches, in multi-fidelity, or with both quantitative and qualitative variables. 
In general, running some complex codes under different parameters in different computer is a time-saving method, which is called as computer experiment in batches.
Each slice of the sliced Latin hypercube designs can be used to each batch, which makes both the design on each computer and the whole design can achieve optimal one-dimensional uniformity.
The experiments with both quantitative and qualitative variables are very common.
\cite{deng2015design} proposed a new type of designs, marginally coupled designs, for this problem. 
Sliced Latin hypercube designs are desirable to deal with this problem.
Namely, we arrange each slice of designs to each combination of qualitative variables.

While most existing methods generate designs with equal batch sizes, 
in many applications sliced designs with unequal run sizes are needed. 
For instance, when simulations are carried out from multiple computers, it is desirable to assign more runs to faster computers;
to integrate a computer model with one qualitative factor that is not uniformly distributed, it is most efficient to assign more runs to levels with higher probability; 
to emulate computer experiments with tunable accuracy, it was suggested in \citet{he2017optimization} to use more low-accuracy runs than high-accuracy runs. 

In this paper, we give, to the best of our knowledge, the first construction of sliced Latin hypercube designs that allow the run sizes to be chosen arbitrarily. 
Before this work, \citet{Yuan:2017} and \citet{xu2018sliced} constructed sliced Latin hypercube designs with certain types of unequal run sizes. 
Flexible sliced designs~\citep{kong2017flexible} allow flexibly chosen run sizes but are not Latin hypercube designs. 

It is commonly believed that Latin hypercube designs with uncorrelated or nearly uncorrelated columns are more advantageous than average Latin hypercube designs~\citep{owen1994controlling}. 
Inspired by the method of reducing correlations of equal-size sliced Latin hypercube designs~\citep{chen2018controlling}, 
we also provide an algorithm to reduce correlations of our proposed designs. 
Numerical results suggest that this leads to improved performance in some circumstances. 

The remainder of the article is organized as follows.
The constructions for sliced Latin hypercube designs with arbitrary run sizes are given in Section 2.
Section 3 provides an algorithm to reduce the column correlation of designs.
Section 4 gives some numerical illustrations.
Section 5 concludes this paper.
All proofs are deferred to the appendix.

\section{Construction}

\label{sec:con} 

For $a \in R$, let $\lceil a\rceil$ denote the smallest integer no less than $a$. 
We propose generating the sliced Latin hypercube design in $p$ dimensions with $t$ slices of sizes $n_1,\cdots,n_t$ by the following three steps. 

\begin{itemize}
\item[Step 1:] Initialize $S_0=G_1=\cdots=G_t=\emptyset$.
\item[Step 2:] For $i$ from 1 to $n= \sum_{i=1}^t n_i$, let $S_{i,0} = S_{i-1} \cup \{i\}$ and compute 
\[ \delta_i = \sum_{j=1}^{t} \left\{ \lceil n_j(i+1/2)/n\rceil - \lceil n_j(i-1/2)/n\rceil \right\}. \] 
If $\delta_i > 0$, for $j$ from 1 to $\delta_i$, 
let $k$ be the $j$th smallest integer of set $\{z:\lceil n_z(i+1/2)/n\rceil - \lceil n_z(i-1/2)/n\rceil = 1\}$  
and $u$ be the smallest integer in $S_{i,j-1}$ such that $\lceil n_k(u-1/2)/n\rceil = \lceil n_k(i-1/2)/n\rceil$, 
add $u$ to $G_k$, and let $S_{i,j} = S_{i,j-1} \setminus \{u\}$. 
Let $S_i = S_{i,\delta_i}$ and continue to the next $i$. 
\item[Step 3:] For $j$ from 1 to $t$, uniformly permute $G_j$ for $p$ times and obtain $h_{j,1},\cdots,h_{j,p}$ such that all permutations are carried out independently. 
For $l$ from 1 to $p$, stack $h_{1,l},\cdots,h_{t,l}$ together, divide them by $n$, and subtract them by $1/(2n)$ to obtain the $l$th column of the design. 
\end{itemize}

To better understand the algorithm, 
we now present a simple example. 

\begin{example}\label{exa:1}
Consider $t = 3,$ $n_1=2$, $n_2=5$, $n_3=10$, $n=17$, and $p=3$. 
Here, $(\delta_1,\cdots,\delta_n) = (0,1,2,0,1,0,2,0,2,2,0,1,0,1,1,0,3)$. 
Since $\delta_1=0$, we have $S_1 = \{1\}$. 
For $i=2$, $S_{2,0} = S_1 \cup \{2\} = \{1,2\}$, $\delta_i=1$, 
$k=3$ is the only integer satisfying $\lceil n_k(i+1/2)/n\rceil - \lceil n_k(i-1/2)/n\rceil = 1$, 
and $u=1$ is the smallest integer among the two integers satisfying $\lceil n_3(u-1/2)/n\rceil = \lceil n_3(i-1/2)/n\rceil$. 
Thus, $S_2 = S_{2,1} = S_{2,0} \setminus \{1\} = \{2\}$ and we assign 1 to $G_3$. 
For $i=3$, $S_{3,0} = \{2,3\}$, $\delta_i=2$, 
and both $k=2$ and $k=3$ make $\lceil n_k(i+1/2)/n\rceil - \lceil n_k(i-1/2)/n\rceil = 1$. 
We first set $k=2$ and find that $u=2$ is the smallest integer satisfying $\lceil n_2(u-1/2)/n\rceil = \lceil n_2(i-1/2)/n\rceil$. 
Thus, $S_{3,1} = \{3\}$ and we assign 2 to $G_2$. 
We then set $k=3$. Luckily, the only number in $S_{3,1}$, $u=3$, makes $\lceil n_3(u-1/2)/n\rceil = \lceil n_3(i-1/2)/n\rceil$. 
Thus, $S_3 = S_{3,2} = \emptyset$ and we assign 3 to $G_3$. 
After going through all $i$, we finally obtain $S_n=\emptyset$, $G_1=\{7,14\}$, $G_2=\{2,5,9,12,16\}$, and $G_3=\{1,3,4,6,8,10,11,13,15,17\}$.
Randomly permuting $G_1$, $G_2$ and $G_3$, we obtain $h_{1,1} = (7,14)$, $h_{2,1} = (12,2,16,9,5)$, and $h_{3,1} = (15, 6, 17, 11, 1, 13, 10, 3, 4, 8)$. 
Thus, the first column of the final design is $(13, 27, 23, 3, 31, 17, 9, 29, 11, 33, 21, 1, 25, 19, 5, 7, 15)^{T}/34$. 
Similarly, we can obtain other columns of the design. 
\end{example}

\begin{remark}
The algorithm is valid only if in Step~2 there is at least one element in $S_{i,j-1}$ such that $\lceil n_k(u-1/2)/n\rceil = \lceil n_k(i-1/2)/n\rceil$. 
Proposition~\ref{pro:set-non-empty:mid} below ensures this. 
\end{remark}

\begin{proposition}
\label{pro:set-non-empty:mid}
For any $i = 1,\cdots,n$, $\delta_i >0$, and $j = 1,\cdots,\delta_i$, 
there is at least one element of $S_{i,j-1}$ that makes $\lceil n_k(u-1/2)/n\rceil = \lceil n_k(i-1/2)/n\rceil$. 
\end{proposition}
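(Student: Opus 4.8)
The plan is to reinterpret the whole construction as a greedy matching between the $n$ positions $1,\dots,n$ and the $n$ blocks induced by the $t$ slices, and to show this matching never stalls by combining a Hall-type counting bound with a short exchange argument. Write $c_k(i)=\lceil n_k(i-1/2)/n\rceil$ for $k=1,\dots,t$ and $i=1,\dots,n$. Since $n_k\le n$, consecutive values of $c_k$ differ by $0$ or $1$, so for each $k$ the level sets $B_{k,m}=\{i:c_k(i)=m\}$, $m=1,\dots,n_k$, are consecutive integer blocks partitioning $\{1,\dots,n\}$, and $\delta_i$ is exactly the number of slices whose block \emph{ends} at $i$, i.e.\ $i=\max B_{k,m}$. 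By an easy induction $S_{i,j-1}$ is precisely the set of positions in $\{1,\dots,i\}$ not yet placed in any $G$, so the element $u$ sought in Step~2 is the smallest still-unassigned element of $B_{k,c_k(i)}$. Hence the algorithm is the greedy rule that processes the $n$ blocks in increasing order of right endpoint $\max B_{k,m}$ (ties broken by slice index) and assigns to each block the smallest free position it contains; the proposition is the claim that this greedy never runs out of free positions.

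Next I would reduce a hypothetical failure to a violation of a Hall condition. Suppose the greedy first fails on a block $B^\ast=B_{k,c_k(i)}=\{a,\dots,i\}$, so every position of $\{a,\dots,i\}$ is already used. Let $a^\ast\le a$ be minimal with $\{a^\ast,\dots,i\}$ entirely used at that moment. Using that the greedy always takes the \emph{smallest} free position of a block, I would argue that every block occupying a position of $\{a^\ast,\dots,i\}$ has left endpoint $\ge a^\ast$: a block with smaller left endpoint that skipped to a position $\ge a^\ast$ would force position $a^\ast-1$ to have been used already, contradicting the minimality of $a^\ast$. These blocks were processed before $B^\ast$, so their right endpoints are $\le i$; together with $B^\ast$ they are $i-a^\ast+2$ distinct blocks all contained in $\{a^\ast,\dots,i\}$, a set of only $i-a^\ast+1$ positions. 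Thus a failure yields an integer interval $[x,y]$ containing more than $y-x+1$ blocks, and it remains to rule this out.

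The core of the argument is therefore the count: for every $1\le x\le y\le n$ the number of blocks $B_{k,m}$ with $B_{k,m}\subseteq\{x,\dots,y\}$ is at most $y-x+1$. For a fixed slice $k$ this number equals the number of right endpoints of slice $k$ in $\{x-1,\dots,y\}$ minus one whenever that number is positive, and equals $0$ otherwise; note the right endpoints located at a given $s$ number exactly $\delta_s$, which is why they are the algorithm's completions. Writing $A$ for the set of slices having a right endpoint in $\{x-1,\dots,y\}$ and using the per-slice telescoping $c_k(y+1)-c_k(x-1)$ for the endpoint count, the total becomes $\sum_{k\in A}\{c_k(y+1)-c_k(x-1)\}-|A|$. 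The key estimate is $c_k(y+1)-c_k(x-1)<n_k(y-x+2)/n+1$, which follows from $\lceil z\rceil<z+1$ and $\lceil z\rceil\ge z$; summing over $k\in A$ gives $\sum_{k\in A}\{c_k(y+1)-c_k(x-1)\}<(y-x+2)\,\tfrac1n\sum_{k\in A}n_k+|A|\le(y-x+2)+|A|$, so after subtracting $|A|$ the total is strictly below $y-x+2$, hence at most $y-x+1$. The boundary case $x=1$ I would treat directly in the same style: there every block ending by $y$ is contained in $\{1,\dots,y\}$, giving count $\sum_k\{c_k(y+1)-1\}<y+1/2\le y$.

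The main obstacle is precisely this counting step. A naive bound on each ceiling introduces an additive error of order $t$, which is exactly the margin by which the inequality would otherwise fail; the fix is to discard the slices with no right endpoint in the window \emph{before} estimating, so that the spurious $+1$'s attach only to the slices in $A$ and are then cancelled exactly by the $-|A|$ coming from the ``minus one per slice'' in the contained-block count. Arranging this cancellation cleanly, and correctly handling the boundary case $x=1$ together with the $\pm1/2$ offsets inside the ceilings, is the delicate part; by contrast the exchange argument of the second paragraph is routine once the smallest-free-position property is invoked.
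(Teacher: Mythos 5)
Your proof is correct, and its counting core is the paper's Lemma~1 in disguise: with $a=x-1$ and $l=y-x+1$, the paper's condition $\lceil n_j(a-1/2)/n\rceil<\lceil n_j(i-1/2)/n\rceil<\lceil n_j(i+1/2)/n\rceil$ says exactly that the block of slice $j$ ending at $i$ is contained in $\{x,\dots,y\}$, so your interval bound and Lemma~1 are the same statement, and the two proofs cancel the per-slice $+1$ errors in equivalent ways (the paper bounds every slice's contribution by $\lceil n_j(l+1)/n\rceil-1<n_j(l+1)/n$ using subadditivity of the ceiling; you restrict the sum to the slices in $A$ so that the $-|A|$ absorbs the $+1$'s). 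Where you genuinely depart from the paper is the reduction to this lemma. The paper proves a stronger static claim --- sorting the nested blocks ending at $i$ by size, $\mathrm{card}(\Gamma_{i,j}\cap S_{i,0})\ge\delta_i-j+1$ --- by contradiction, anchoring its window at $q$, the $(m+1)$th largest free element of $S_{i,0}$; you instead analyze the \emph{first} failure of the greedy and anchor at the minimal $a^\ast$ with $\{a^\ast,\dots,i\}$ entirely used. The exchange mechanism is identical in both (an assigned block cannot reach left past a currently free position, because the greedy takes the smallest free position in a block). Your organization buys something concrete: the paper must still pass from its static claim about $S_{i,0}$ to the dynamic claim about $S_{i,j-1}$, and it does so with a bare ``therefore''; that step is not immediate, since the algorithm processes the blocks ending at $i$ in slice-index order while $\Gamma_{i,j}$ is ordered by block size, and these orders can disagree (for $n=105$, slices of sizes $24$ and $25$ both have blocks ending at $i=13$, of lengths $4$ and $5$ respectively), and since subtracting the $j-1$ removals already made at step $i$ from the static bound can leave nothing when $j$ is large; repairing this requires a nestedness invariant that the paper never states. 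Your first-failure formulation proves the dynamic statement outright and never meets this issue; what you give up is only the sharper per-block inequality, which the paper does not use elsewhere anyway.
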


All of the proofs are given in the Appendix. 
Theorem~\ref{the:slhd} below shows the generated designs are sliced Latin hypercube designs. 

\begin{theorem}
\label{the:slhd}
Let $H$ denote an arbitrary column of a design generated from the proposed algorithm. 
Then, (i) $H$ is a permutation of $\{1/(2n),3/(2n),\cdots,(2n-1)/(2n)\}$; 
and (ii) for $i = 1,\cdots,t$, the $(\sum_{k=1}^{i-1} n_k +1)$th to the $(\sum_{k=1}^i n_k)$th entry of $H$ have exactly one element in each of the $n_i$ bins of $(0,1/n_i],\cdots,((n_i-1)/n_i,1]$.
\end{theorem}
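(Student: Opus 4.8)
The plan is to reduce both parts of the theorem to a single combinatorial fact about the groups $G_1,\ldots,G_t$ produced in Step~2: that they partition $\{1,\ldots,n\}$, and that for each $k$ the bin indices $\lceil n_k(g-1/2)/n\rceil$ of the elements $g\in G_k$ run through $\{1,\ldots,n_k\}$ without repetition. Once this is in hand, the affine map $g\mapsto (2g-1)/(2n)$ finishes the job, since it is a bijection from $\{1,\ldots,n\}$ onto $\{1/(2n),3/(2n),\ldots,(2n-1)/(2n)\}$, and since one checks directly that $(g-1/2)/n$ lies in the bin $((b-1)/n_i,\,b/n_i]$ if and only if $\lceil n_i(g-1/2)/n\rceil=b$. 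Thus part (i) is immediate from the partition property and part (ii) from the per-group bin property, noting that the within-slice permutations in Step~3 do not change which bins are occupied.

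First I would verify that the algorithm assigns every integer exactly once. Writing $d_{j,i}=\lceil n_j(i+1/2)/n\rceil-\lceil n_j(i-1/2)/n\rceil$, each $d_{j,i}\in\{0,1\}$, and summing over $i$ telescopes:
\[ \sum_{i=1}^{n} d_{j,i} = \left\lceil \frac{n_j(n+1/2)}{n}\right\rceil - \left\lceil \frac{n_j}{2n}\right\rceil = (n_j+1)-1 = n_j, \]
using $0<n_j/(2n)\le 1/2$. Hence $\sum_{i=1}^{n}\delta_i=\sum_{j=1}^{t}n_j=n$, and the number of integers routed to $G_k$ over the whole run is $\sum_{i=1}^{n} d_{k,i}=n_k$. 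Since Proposition~\ref{pro:set-non-empty:mid} guarantees each prescribed removal is feasible, exactly $\delta_i$ distinct elements leave the pending set at step $i$; as the only element newly introduced at step $i$ is $i$ itself and removals are never undone, the total of $n$ removals forces $S_n=\emptyset$ and shows $G_1,\ldots,G_t$ to be a partition of $\{1,\ldots,n\}$ with $|G_k|=n_k$.

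Next I would pin down the bin index of each element of $G_k$. The map $i\mapsto \lceil n_k(i-1/2)/n\rceil$ is nondecreasing in $i$, equals $1$ at $i=1$ and $n_k$ at $i=n$, and increases by exactly one precisely at those steps $i$ with $d_{k,i}=1$; there are $n_k$ such steps, and at them the common value $b=\lceil n_k(i-1/2)/n\rceil$ assumes each of $1,\ldots,n_k$ once as $i$ increases. At such a step the algorithm removes a $u$ with $\lceil n_k(u-1/2)/n\rceil=\lceil n_k(i-1/2)/n\rceil=b$ and adds it to $G_k$, and each slice $k$ is selected at most once per step (the label $k$ is drawn from a set of distinct slice labels). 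Consequently the $n_k$ elements placed in $G_k$ carry bin indices $1,2,\ldots,n_k$, each exactly once, which is the per-group property required above.

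The main obstacle I anticipate is not any single estimate but the bookkeeping that ties the ``bin-closing'' step $i$ to the element actually removed: I must ensure that always taking the smallest feasible $u$ never exhausts a bin prematurely, so that every bin is served exactly once. This is exactly what Proposition~\ref{pro:set-non-empty:mid} certifies, so I would invoke it to guarantee feasibility and then lean on the telescoping count, concluding that feasibility together with the correct totals forces the bijection between closing steps and bins. The remaining verifications---that the ceiling bin index coincides with the stratification bin, and that within-group permutations are irrelevant---are routine.
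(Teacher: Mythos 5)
Your proposal is correct and follows essentially the same route as the paper's own proof: the telescoping ceiling count $\lceil n_j(n+1/2)/n\rceil-\lceil n_j/(2n)\rceil=n_j$ gives $\mathrm{card}(G_j)=n_j$, which combined with $\bigcup_j G_j\subseteq\{1,\ldots,n\}$ (and feasibility from Proposition~\ref{pro:set-non-empty:mid}) yields the partition for part (i), and the bin-closing observation that each step $i$ with $\lceil n_j(i-1/2)/n\rceil<\lceil n_j(i+1/2)/n\rceil$ deposits into $G_j$ an element with that exact bin index yields part (ii). Your write-up is somewhat more explicit about the staircase structure of $i\mapsto\lceil n_k(i-1/2)/n\rceil$ and the bin/ceiling correspondence, but these are the same ingredients the paper uses.
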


\begin{remark}
In contrast to ``randomized'' Latin hypercube designs with entries that take arbitrary values in $[0,1]$, our algorithm yields ``midpoint'' Latin hypercube designs with entries that locate at the center of the bins of $(0,1/n],\cdots,((n-1)/n,1]$. 
One can view our algorithm as assigning elements of the one-dimensional midpoint Latin hypercube design $\{1/(2n),\cdots,(2n-1)/(2n)\}$ 
to $G_1,\cdots,G_t$, such that each of the $n_i$ bins of $(0,1/n_i],\cdots,((n_i-1)/n_i,1]$ contains exactly one element of $G_i$ for $i=1,\cdots,t$. 
We focus on midpoint designs because, unlike the case with equal run sizes, not every one-dimensional Latin hypercube design can be partitioned at will. 
For instance, consider the case with $t=3$, $n_1=1$, $n_2=n_3=3$, $n=7$, and $H=(0.1,0.2,0.3,0.5,0.7,0.8,0.9)^T$. 
It is not difficult to verify that each of the seven bins of $(0,1/7],\cdots,(6/7,1]$ contains exactly one point of $H$, but there is no partition of $H$ to $G_1$, $G_2$, and $G_3$ that fulfills the property of Theorem~\ref{the:slhd}(ii). 
Interestingly, when $H = \{1/(2n), \cdots, (2n-1)/(2n)\}$, at least one valid assignment always exists, 
and Theorem~\ref{the:slhd} is the first result indicating this.  
Furthermore, from numerical results shown in Section~\ref{sec:sim}, midpoint Latin hypercube designs are usually as good as or even better than randomized Latin hypercube designs. 
\end{remark}
\section{Reducing correlations}
\label{sec:corr}
\cite{chen2018controlling} gives a method to control column-wise correlations of sliced Latin hypercube designs. 
In this section, we provide an algorithm to reduce the correlations between each column of the designs proposed in Section 2.

Let $D_{j,k}$ denote the $j$th slice of the $k$th column of $D$, a sliced design obtained from our algorithm in Section~\ref{sec:con}. 
We can further reduce the correlations of $D$ using the following five steps. 

\begin{itemize}
\item[Step 1:] For $j$ from $1$ to $t$, $k$ from $2$ to $p$, and $l$ from $1$ to $k-1$, 
fit a simple linear regression model with $D_{j,l}$ being the response and $D_{j,k}$ being the only covariate besides the intercept, 
and replace $D_{j,l}$ with the residual. 
\item[Step 2:] For $j$ from $1$ to $t$, $k$ from $1$ to $p$, and $u$ from $1$ to $n_j$, use the $u$th smallest element of $G_j$, subtracted by $1/2$ and divided by $n$, to replace the $u$th smallest element of $D_{j,k}$. 
\item[Step 3:] For $j$ from $1$ to $t$, $k$ from $p-1$ to $1$, and $l$ from $p$ to $k+1$, 
fit a simple linear regression model with $D_{j,l}$ being the response and $D_{j,k}$ being the only covariate besides the intercept, 
and replace $D_{j,l}$ with the residual. 
\item[Step 4:] For $j$ from $1$ to $t$, $k$ from $1$ to $p$, and $u$ from $1$ to $n_j$, use the $u$th smallest element of $G_j$, subtracted by $1/2$ and divided by $n$, to replace the $u$th smallest element of $D_{j,k}$. 
\item[Step 5:] Iterate Steps~1-4 nine more times. 
\end{itemize}
Here,  replacing $D_{j,l}$ with the residual means
$$D_{j,l}=D_{j,l}-(D_{j,k}-\bar{D}_{j,k})\rho(D_{j,k},D_{j,l})\sigma(D_{j,l})/\sigma(D_{j,k}),$$
where $\rho(D_{i,j},D_{i,k})$ is the sample correlation of $D_{i,j}$ and $D_{i,k}$, $\sigma(D_{i,k})$ and $\sigma(D_{i,j})$ are the standard deviations of the two vectors and $D_{i,j}-\bar{D}_{i,j}$ amounts to $D_{i,j}$
minus its mean times a vector of 1s. 

\begin{remark}
\cite{chen2018controlling} controls  column-wise correlations of each slices, then combine them to obtain a sliced Latin hypercube designs.
Similarly, we reduce the correlations of each slices, and combine them to let the each slice and the whole design be Latin hypercube designs.
\end{remark}

\begin{remark}
The algorithm is said to converge if the root mean square correlation among columns, which is defined in \cite{owen1994controlling}, stops decreasing.
The root mean square correlation is 
\begin{equation*}
\rho_{\rm rms} (D)= \left(\frac{\sum_{1\leq j<k\leq p}(\rho(D_{:,j},D_{:,k}))^2}{p(p-1)/2}\right)^{1/2}
\end{equation*}
where $D$ is a design in $p$ factors, $D_{:,j}$ and $D_{:,k}$ are the $j$th and $k$ columns of $D$, respectively.
We stop the above algorithm after 10 iterations because from our experience it already warrants convergence. 
\end{remark}

We give an example to illustrate this algorithm.
\begin{example}
Consider $t=2,n_1=6,n_2=7,p=3$, using the algorithm 1 in section 2 to generate the initial design $D:$
\begin{equation}
\label{equ:initial-D}
D=1/26\times\left[
\begin{array}{cccccc|ccccccc}
19 & 23 & 11 & 5 & 15 & 1  & 25 & 9  & 7  & 3  & 17 & 13 & 21 \\
15 & 23 & 11 & 5 & 1  & 19 & 9  & 13 & 21 & 17 & 3  & 7  & 25 \\
11 & 15 & 19 & 5 & 23 & 1  & 17 & 21 & 9  & 25 & 7  & 13 & 3 
\end{array}
\right]^T
\end{equation}
In the Step 1, when $j=1,k=2,l=1$, we have 
$D_{1,1}=(19 ,23,11,5,15,1 )^T/26$ and $D_{1,2}=(15, 23, 11, 5, 1, 19)^T/26$.
Clearly,
$\sigma(D_{1,1})=\sigma(D_{1,2})=0.7189,\rho(D_{1,2},D_{1,1})=0.2328.$
Then, renew $D_{1,1}$ to get that
\begin{equation*}
\begin{split}
D_{1,1}
&=D_{1,1}-(D_{1,2}-\bar{D}_{1,2})\rho(D_{1,2},D_{1,1})\sigma(D_{1,1})/\sigma(D_{1,2})\\
&=(0.7068,     0.7891,     0.4350 ,    0.2580  ,   0.6784,   -0.0212)^T.
\end{split}
\end{equation*}
Similarly, when $j=1,k=3,l=1$, we have 
$$D_{1,1}=(0.7632, 0.8196, 0.2606, 0.3710,  0.3170, 0.31464)^T.$$
Then, Step 1 gives that
\begin{equation*}
D=\left[
\begin{array}{ccc}
0.7633 & 0.5583 & 11/26 \\
0.8196 & 0.9218 & 15/26 \\
0.2606 & 0.5161 & 19/26 \\
0.3710 & 0.0900 & 5/26  \\
0.3170 & 0.1872 & 23/26 \\
0.3146 & 0.5727 & 1/26  \\
\hline
1.0273 & 0.3681 & 17/26 \\
0.4886 & 0.5476 & 21/26 \\
0.1816 & 0.7784 & 9/26  \\
0.3345 & 0.7271 & 25/26 \\
0.5279 & 0.0733 & 7/26  \\
0.4890 & 0.2656 & 13/26 \\
0.6050 & 0.8938 & 3/26 
\end{array}
\right]
\end{equation*}
In Step 2, for $j = 1$, we have $G_1=\{  1/26,  5/26, 11/26,  15/26, 19/26, 23/26\}$,
then $$D_{1,1}=(0.7632, 0.8196, 0.2606, 0.3710,  0.3170, 0.31464)^T$$ is replaced with
$$D_{1,1}=(19/26, 23/26,1/26, 15/26, 11/26, 5/26)^T.$$
Similarly, we have 
\begin{equation}
\label{equ:forward-D}
D=1/26\times\left[
\begin{array}{cccccc|ccccccc}
19 & 23 & 1  & 15 & 11 & 5  & 25 & 9  & 3  & 7  & 17 & 13 & 21 \\
15 & 23 & 11 & 1  & 5  & 19 & 9  & 13 & 21 & 17 & 3  & 7  & 25 \\
11 & 15 & 19 & 5  & 23 & 1  & 17 & 21 & 9  & 25 & 7  & 13 & 3 
\end{array}
\right]^T
\end{equation}
after Step 2.
Finally, we obtain
\begin{equation}
\label{equ:final-D}
D=1/26\times\left[
\begin{array}{cccccc|ccccccc}
19 & 23 & 1  & 15 & 11 & 5  & 25 & 9  & 3  & 7  & 17 & 13 & 21 \\
15 & 23 & 11 & 1  & 5  & 19 & 13 & 9  & 17 & 21 & 3  & 7  & 25 \\
11 & 15 & 19 & 5  & 23 & 1  & 21 & 17 & 7  & 25 & 9  & 13 & 3 
\end{array}
\right]^T
\end{equation}
Figure \ref{fig:reduce-corr} shows that the root mean square correlations of the each slice and the whole design have a distinct reduction.


%
%
\begin{figure}
\centering
\includegraphics[width=0.65\linewidth]{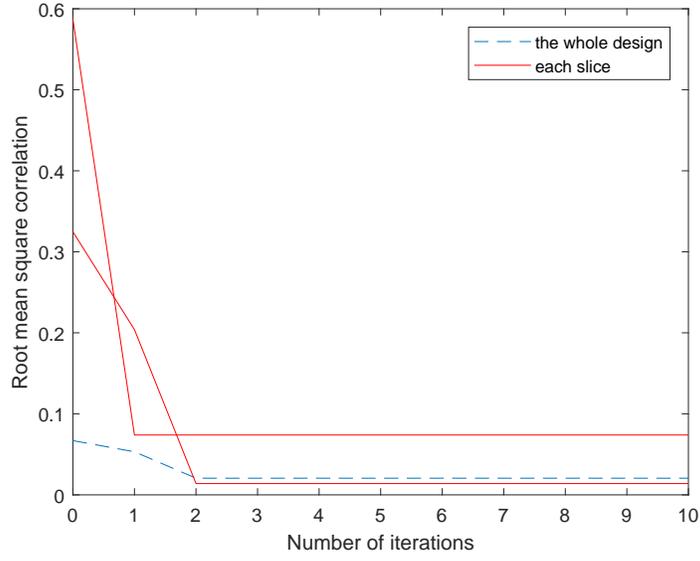}
\caption{The changes of root mean square correlations of the each slice and the whole design}
\label{fig:reduce-corr}
\end{figure}
\end{example}

\section{Numerical comparison}
\label{sec:sim}

We now demonstrate the usefulness of our proposed sliced designs in numerically integrating the two test functions used in \citet{qian2012sliced_Latin},   
\begin{eqnarray*}
f_1(x) & = & \log(x_1x_2x_2x_4x_5), \\
f_2(x) & = & \log\left( x_1^{-1/2} + x_2^{-1/2} \right).  
\end{eqnarray*}

Assume we have $t$ computers to evaluate the functions and from time constraints we can arrange at most $n_1,\cdots,n_t$ runs for the computers separately. 
We have at least three choices to solve the problem, as follows. 
First, we can use a single design with $n=\sum_{i=1}^t n_i$ runs and assign the runs randomly to the $t$ computers. 
We consider using an ordinary Latin hypercube design~\citep{mcKay1979a_compar}, its midpoint modification, and its correlation-controlled extension~\citep{owen1994controlling} for this approach. 
Second, we can combine $t$ independently generated Latin hypercube designs with sizes $n_1,\cdots,n_t$, separately, and assign one design to each computer. 
Third, we can use a flexible sliced design~\citep{kong2017flexible} or our newly proposed sliced Latin hypercube design and assign one slice to each computer. 
These methods are shown as follows.
\begin{itemize}
\item[\textbf{RLH}] single randomized Latin hypercube design with $n$ runs;
\item[\textbf{MLH}] single midpoint Latin hypercube design with $n$ runs;
\item[\textbf{CLH}] single correlation-controlled Latin hypercube design with $n$ runs;
\item[\textbf{IMLH}] $t$ independent midpoint Latin hypercube designs with $n_1,\cdots,n_t$ runs, respectively; 
\item[\textbf{ICLH}]  $t$ independent correlation-controlled Latin hypercube designs with $n_1,\cdots,n_t$ runs, respectively; 
\item[\textbf{FSD}] flexible sliced design in $t$ slices, and its $i$th slice contains $n_i$ runs;
\item[\textbf{SLH}] the proposed sliced Latin hypercube design in $t$ slices, and its $i$th slice contains $n_i$ runs; 
\item[\textbf{CSLH}] the proposed sliced Latin hypercube design with reduced correlations in $t$ slices, and its $i$th slice contains $n_i$ runs.
\end{itemize}
Under all approaches, we estimate the mean function value using the averaged output value among completed computer trials. 

We compare the methods using two scenarios. 
In the first scenario, all of the functional evaluations terminate correctly and we obtain all $n$ output values. 
In the second scenario, one random computer fails and we obtain all other output values. 
For $f_1$, we assume $t=4$, $n_1=17$, $n_2=13$, $n_3=11$, and $n_4=7$; 
for $f_2$, we assume $t=3$, $n_1=9$, $n_2=7$, and $n_3=6$. 
We repeat the procedure 10,000 times and report the averaged root-mean-square estimation error in Table~\ref{tab:result}. 

\begin{table}
\caption{Root-mean-square estimation error on mean output value. \label{tab:result}}
\begin{center}
\begin{tabular}{cccccccccc}
Function & Scenario & RLH & MLH & CLH & IMLH & ICLH & FSD & SLH & CSLH \\
$f_1$ & 1 & $0.0487$&	$0.0360$&	$0.0360$&	$0.1428$&$0.1428$&	$0.0971$&	$0.0360$&	$0.0360$ \\
      & 2 & $0.1941$&	$0.1851$&	$0.1845$&	$0.1442$&	$0.1442$&	$0.1132$&	$0.0958$&	$0.0958$ \\
$f_2$ & 1 & $0.0121$&	$0.0060$&	$0.0041$&	$0.0117$&	$0.0110$&	$0.0194$&	$0.0061$&	$0.0042$ \\
      & 2 & $0.0363$&	$0.0322$&	$0.0319$&	$0.0122$&	$0.0112$&	$0.0239$&	$0.0099$&	$0.0075$ 
\end{tabular}
\end{center}
\end{table}

Observed from the results, midpoint Latin hypercube designs are usually better than ordinary Latin hypercube designs. 
Reducing correlation helps for $f_2$ but has no effect for $f_1$. 
Both with and without reducing correlations, the proposed sliced designs perform the best for all functions and scenarios. 
Single Latin hypercube designs are as good as the proposed designs in the first scenario but much worse in the second scenario. 
Independent Latin hypercube designs and flexible sliced designs are inferior to the proposed designs in both scenarios. 
These observations suggest that the proposed new designs, while allowing flexible run sizes, achieve the same level of variance reduction as ordinary sliced Latin hypercube designs. 

\section{Conclusion}
\label{sec:conc}
We propose, to the best of our knowledge, the first construction of sliced Latin hypercube designs that allow the run sizes to be chosen arbitrarily. 
Moreover, we provide an algorithm to reduce correlations of our proposed designs. 
Numerical results suggest that this leads to improved performance in some circumstances. 
\appendix
\section*{Appendix 1}
\subsection*{Proofs}

For a set $A$, let $\text{card}(A)$ denote its cardinality. 

\begin{lemma}
\label{lem:number_of_set:mid}
Assume $t,n_1,n_2\cdots,n_t$, $a$ and $l$ are integers with $a+l\leq n$ and $n = \sum_{j=1}^{t}n_j$, and
let 
\[ \Omega=\{(i,j):a+1\leq i\leq a+b,1\leq j\leq t,\lceil n_j(a-1/2)/n\rceil<\lceil n_j (i-1/2)/n\rceil<\lceil n_j(i+1/2)/n\rceil\}.\] 
Then, $\text{card}(\Omega)\leq l$. 
\end{lemma}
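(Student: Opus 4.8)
The plan is to estimate the contribution of each index $j$ separately and then sum, reading the range in the definition of $\Omega$ as $a+1\le i\le a+l$. Throughout I write $g_j(x)=\lceil n_jx/n\rceil$, so that $(i,j)\in\Omega$ exactly when $a+1\le i\le a+l$ and $g_j(a-1/2)<g_j(i-1/2)<g_j(i+1/2)$. For fixed $j$, set $c_j=\text{card}\{i:(i,j)\in\Omega\}$ and let
\[ N_j = g_j(a+l+1/2)-g_j(a-1/2) = \sum_{i=a}^{a+l}\bigl(g_j(i+1/2)-g_j(i-1/2)\bigr), \]
the total increase of $g_j$ across the block $\{a,\dots,a+l\}$, obtained by telescoping. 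Since $n_j\le n$, each unit step raises $g_j$ by $0$ or $1$, so $N_j$ is exactly the number of positions $i\in\{a,\dots,a+l\}$ at which $g_j$ jumps.

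The key step is to reinterpret the two defining inequalities of $\Omega$ as statements about these jumps. The right inequality $g_j(i-1/2)<g_j(i+1/2)$ says that $g_j$ jumps at $i$; the left inequality $g_j(a-1/2)<g_j(i-1/2)$ says that $g_j$ has already jumped somewhere in $\{a,\dots,i-1\}$. Hence $c_j$ counts the jump positions lying in $\{a+1,\dots,a+l\}$ that are strictly preceded by another jump of the block. If $i^\ast$ denotes the earliest jump position in $\{a,\dots,a+l\}$ (when $N_j\ge 1$), then $i^\ast$ is never counted: either $i^\ast=a$, so it falls outside the range $i\ge a+1$, or $i^\ast\ge a+1$ but it violates the left inequality since no jump precedes it. Therefore $c_j\le N_j-1$ whenever $N_j\ge 1$, and $c_j=0$ when $N_j=0$. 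This "$-1$" is precisely what makes the bound close; the crude estimate $c_j\le N_j$ is too weak.

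It then remains to sum. For each $j$ with $N_j\ge 1$ I would apply $\lceil x\rceil<x+1$ and $\lceil y\rceil\ge y$ to obtain
\[ N_j-1 < \frac{n_j(a+l+1/2)}{n}-\frac{n_j(a-1/2)}{n} = \frac{n_j(l+1)}{n}, \]
so that, since only the indices with $N_j\ge 1$ contribute,
\[ \text{card}(\Omega)=\sum_{j:\,N_j\ge 1}(N_j-1) < \frac{l+1}{n}\sum_{j:\,N_j\ge 1} n_j \le \frac{l+1}{n}\sum_{j=1}^{t} n_j = l+1 \]
(the case of an empty index set giving $\text{card}(\Omega)=0\le l$ at once). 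Because $\text{card}(\Omega)$ is a nonnegative integer strictly below $l+1$, it is at most $l$, as claimed.

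I expect the main obstacle to be the combinatorial bookkeeping of the middle paragraph: faithfully translating the two ceiling inequalities into the "jump/earlier-jump" description and arguing that exactly the first jump is removed for each active $j$, which supplies the decisive $-1$ per index. Once that reduction is in place, the remaining ingredients — the telescoping identity, the elementary ceiling estimate, the normalization $\sum_j n_j=n$, and integrality — are routine.
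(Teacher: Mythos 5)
Your proof is correct and takes essentially the same route as the paper's: the same decomposition of $\Omega$ into the per-index sets $\Omega_j$, the same key ``minus one'' observation (each active index contributes at most its total number of jumps minus one, which the paper writes as $\text{card}(\Omega_j)\leq \lceil n_j(l+1)/n\rceil-1$), and the same conclusion by summing to a bound strictly below $l+1$ and invoking integrality. Your explicit ``the first jump is never counted'' argument merely spells out what the paper asserts in its two-case computation of $\text{card}(\Omega_j)$.
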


\begin{proof} 
Let
$$\Omega_j = \{i:a+1\leq i\leq a+l, \lceil n_j(a-1/2)/n\rceil<\lceil n_j (i-1/2)/n\rceil<\lceil n_j(i+1/2)/n\rceil\},$$ 
and then $\Omega = \cup_{j= 1}^t(\Omega_j,j)$ and $\text{card}(\Omega) = \sum_{j= 1}^t \text{card}(\Omega_j)$.

When $\lceil n_j(a+l+1/2)/n\rceil = \lceil n_j(a-1/2)/n\rceil$, we have $\text{card}(\Omega_j) = 0 $. 
Because 
$\lceil n_j(a+l+1/2)/n\rceil \geq  \lceil n_j(a-1/2)/n\rceil+\lceil n_j(l+1)/n\rceil-1$,
we have $\lceil n_j(l+1)/n\rceil=1$ and therefore 
\begin{equation*}
\label{equ:lem:num:mid:1}
\text{card}(\Omega_j)=\lceil n_j(l+1)/n\rceil -1.
\end{equation*}
When $\lceil n_j(a+l+1/2)/n\rceil > \lceil n_j(a-1/2)/n\rceil$, we have $\text{card}(\Omega_j) = \lceil n_j(a+l+1/2)/n\rceil - \lceil n_j(a-1/2)/n\rceil-1.$
Because $\lceil n_j(a+l+1/2)/n\rceil \leq \lceil n_j(l+1)/n\rceil +\lceil n_j(a-1/2)/n\rceil$, 
we have 
\begin{equation*}
\text{card}(\Omega_j)\leq  \lceil n_j(l+1)/n\rceil-1.
\end{equation*}
Combining the two cases, we have
\begin{equation*}
\text{card}(\Omega) = \sum_{j= 1}^t \text{card}(\Omega_j)\leq \sum_{j= 1}^t\{\lceil n_j(l+1)/n\rceil-1\}<\sum_{j= 1}^t\{n_j(l+1)/n\} = l+1.
\end{equation*}
\end{proof}
\begin{proof} [of Proposition \ref{pro:set-non-empty:mid}]
For any $i$ with $\delta_i>0$, let $\rho_i=\{l:\lceil n_l(i+1/2)/n\rceil-\lceil n_l(i-1/2)/n\rceil=1\}$ and $\rho_{i,j}$ be the $j$th smallest element of $\rho_i$.
For any $l\in\rho_i$, sort the $\delta_{i}$ sets $\{u:\lceil n_l(u-1/2)/n\rceil = \lceil n_l(i-1/2)/n\rceil\}$ by its cardinality in descending order; 
for sets with the same cardinality, sort them in ascending order of $l$. 
Let $\Gamma_{i,j}$ and $\pi_{i,j}$ denote the $j$th set and the corresponding $l$, respectively. 


In the follows we show the set $\Gamma_{i,j} \cap S_{i,0}$ contains at least $\delta_i-j+1$ elements for any $(i,j)$ with $\delta_i>0$ and $j \leq \delta_i$.
For any $1\leq i\leq n$, 
\begin{equation*}
\begin{split}
i- \sum_{u=1}^{i}\delta_u 
& = i-  \sum_{l=1}^{t}\sum_{u=1}^{i}\{\lceil n_l(u+1/2)/n\rceil - \lceil n_l(u-1/2)/n\rceil\}\\
& = i-  \sum_{l=1}^{t}\{\lceil n_l(i+1/2)/n\rceil - \lceil n_l/(2n)\rceil\}\\
& = t-1/2-\sum_{l=1}^{t}\xi_l,
\end{split}
\end{equation*}
where $\xi_l = \lceil n_l(i+1/2)/n\rceil-n_l(i+1/2)/n\in[0,1)$. 
Furthermore, $\sum_{l=1}^{t}\xi_l+1/2 < t+1/2$ is an integer. 
Therefore, $\sum_{l=1}^{t}\xi_l+1/2\leq t$ and thus $i- \sum_{u=1}^{i}\delta_u \geq 0$.
Therefore, $\text{card}(S_{i,0}) = i - \sum_{u=1}^{i-1}\delta_u \geq \delta_i$.

Consider arbitrary $i$ and $j$ with $\delta_i>0$ and $1\leq j\leq \delta_i$. 
Suppose $\text{card}(\Gamma_{i,j} \cap S_{i,0} ) = m < \delta_i-j+1$. 
Because $\text{card}(S_{i,0})\geq \delta_i\geq \delta_i-j+1 > m$, $S_{i,0}$ has at least $m+1$ elements.
Therefore, the $m$th largest element of $S_{i,0}$ is in $\Gamma_{i,j}$ while the $(m+1)$th largest element is not in $\Gamma_{i,j}$.
Let $q $ denote the $(m+1)$th largest element of $S_{i,0}$, we have $q\notin\Gamma_{i,j}$ and $i$ is the largest element of the set $S_{i,0}$.
Clearly, $\{q+1,\cdots,i-1\}$ contains $m-1$ elements of $S_{i,0}$. 
Namely, the set $\{u: q+1\leq u\leq i-1 , u\notin S_{i,0}\}$ has $i-q-m$ elements. 
For any $v\in\{u: q+1\leq u\leq i-1, u\notin S_{i,0}\}$, 
there exists a pair of $(w,l)$ which satisfies $q+1\leq w\leq i-1$, $l\in\rho_w$, and $v$ is the smallest number of $\Gamma_{w,z_1}\cap S_{w,z_2-1}$, where $z_1$ and $z_2$  make $\pi_{w,z_1}=\rho_{w,z_2}=l$.
Because $q\in S_{i,0}$, $S_{i,0}\subset S_{w,z_2-1}\cup\{w+1,\cdots,r\}$ and $q <v\leq w$, we have $q\in S_{w,z_2-1}$.
Because $v$ is the smallest number of $\Gamma_{w,z_1}\cap S_{w,z_2-1}$ and $q<v$, we have $q\notin \Gamma_{w,z_1}\cap S_{w,z_2-1}$.
Namely, $q\in S_{w,z_2-1}$ while $q\notin \Gamma_{w,z_1}\cap S_{w,z_2-1}$. 
Therefore, $\lceil n_{l}(q-1/2)/n\rceil < \lceil n_{l}(w-1/2)/n\rceil.$
Consequently, for each element of set $\{u: q+1\leq u\leq i-1, u\notin S_{i,0}\}$, there exists a pair of $(w,l)$ which satisfies $q+1\leq w\leq i-1$ and $\lceil n_{l}(q-1/2)/n\rceil < \lceil n_{l}(w-1/2)/n\rceil< \lceil n_{l}(w+1/2)/n\rceil$. 
Clearly, the $i-q-m$ pairs of $(w,l)$ are different from each other.
Therefore, the set 
$$\{(u,l): q+1\leq u\leq i-1, \lceil n_l(q-1/2)/n\rceil<\lceil n_l (u-1/2)/n\rceil<\lceil n_l(u+1/2)/n\rceil,l = 1,\cdots,t\}$$
has at least $i-q-m$ elements.

Furthermore, $\Gamma_{i,j}\supseteq\Gamma_{i,j+1}$ for any $j = 1,\cdots,\delta_i-1$. 
Because $q\notin\Gamma_{i,j}$, we have $q\notin \Gamma_{i,k}$ for $k=j+1,\cdots,\delta_i$.
Namely, $\lceil n_{\pi_{i,k}}(q-1/2)/n\rceil<\lceil n_{\pi_{i,k}}(i-1/2)/n\rceil$ for $k=j+1,\cdots,\delta_i$. 
Therefore, the set $\{(i,l): \lceil n_l(q-1/2)/n\rceil<\lceil n_l(i-1/2)/n\rceil<\lceil n_l(i+1/2)/n\rceil,l = 1,\cdots,t\}$
has at least $\delta_{i}-j+1$ elements.
Therefore, 
\begin{equation*}
\begin{split}
& \text{card}[ \{(u,l): q+1\leq u\leq i, \lceil n_l(q-1/2)/n\rceil<\lceil n_l(u-1/2)/n\rceil<\lceil n_l(u+1/2)/n\rceil,l = 1,\cdots,t\}]\\
& = \text{card}[ \{(i,l): \lceil n_l(q-1/2)/n\rceil<\lceil n_l(i-1/2)/n\rceil<\lceil n_l(i+1/2)/n\rceil\rceil,l = 1,\cdots,t\}\cup\{(u,l): \\
& \quad\quad q+1\leq u\leq i-1, \lceil n_l(q-1/2)/n\rceil<\lceil n_l(u-1/2)/n\rceil<\lceil n_l(u+1/2)/n\rceil,l= 1,\cdots,t\} ]\\
& \geq i-q-m+\delta_i-j+1\\
& > i-q,
\end{split}
\end{equation*}
which is contradictory to Lemma \ref{lem:number_of_set:mid}. 
Therefore, our assumption that $\text{card}(\Gamma_{i,j}\cap S_{i,0}) = m < \delta_i-j+1$ is false. 
Namely, for any $i = 1,\cdots,n$, $\delta_i >0$, and $j = 1,\cdots,\delta_i$, 
we have $\text{card}(\Gamma_{i,j}\cap S_{i,0}) \geq \delta_i-j+1$ 
and therefore there is at least one element of $S_{i,j-1}$ that makes $\lceil n_k(u-1/2)/n\rceil = \lceil n_k(i-1/2)/n\rceil$.
\end{proof}

\begin{proof} [of Theorem \ref{the:slhd}]
(i) Because $\cup_{i=1}^n S_{i,0} = \{1,\cdots,n\}$, we have $\cup_{j=1}^t G_j \subset \{1,\cdots,n\} $.
However, $\text{card}(G_j) = \lceil n_j(n+1/2)/n\rceil-\lceil n_j/2/n\rceil = n_j$ and $\sum_{j=1}^t \text{card}(G_j) = n$. 
Therefore, $\cup_{j=1}^t G_j = \{1,\cdots,n\} $ and $G_1,\cdots,G_t$ are disjoint. 
Namely, $G_1,\cdots,G_t$ are a partition of $\{1,\cdots,n\} $ 
and thus each dimension of $D$ is a permutation of $\{1/(2n),3/(2n),\cdots,(2n-1)/(2n)\}$.

(ii) For any $i,j$ with $\lceil n_{j}(i-1/2)/n\rceil < \lceil n_{j}(i+1/2)/n\rceil$, there exists an integer $c \in G_j$ such that $\lceil n_{j}(c-1/2)/n\rceil = \lceil n_{j}(i-1/2)/n\rceil$.
Therefore, $\{\lceil n_j(c-1/2)/n\rceil : c\in G_j\} = \{1,\cdots, n_j\}$ for any $j$.
Consequently, $(h_{j,l}-1/2)/n$ has exactly one element in each of the $n_j$ bins of $(0,1/n_j],\cdots,((n_j-1)/n_j,1]$ for any $j$ and $l$. 
\end{proof}

\bibliographystyle{Chicago}

\bibliography{refs}
\end{document}